\documentclass[11 pt]{amsart}
\usepackage{epsfig,graphics,amsmath,amssymb,amsthm}

\newtheorem{Theorem}{Theorem}[section]

\newtheorem{Corollary}[Theorem]{Corollary}

\newtheorem{Lemma}[Theorem]{Lemma}

\theoremstyle{definition}

\def\PMod{\rm PMod}
\def\Mod{\rm Mod}
\def\trace{\rm trace}

\begin{document}


\title[Pseudo--Anosov elements in the mapping class group]
{The number of pseudo--Anosov elements in the mapping class group
of a four--holed sphere}

\author{Ferihe Atalan} \author{Mustafa Korkmaz}
\address{Department of Mathematics, Atilim University,
06836 \newline Ankara, TURKEY}\email{fatalan@atilim.edu.tr}
\address{Department of Mathematics, Middle East Technical University,
06531 \newline Ankara, TURKEY}
\email{korkmaz@metu.edu.tr}
\date{\today}
\subjclass{57N05, 20F38} \keywords{Mapping class group, growth
series, growth functions} \pagenumbering{arabic}

\begin{abstract} We compute the growth series
and the growth functions of reducible and pseudo--Anosov elements
of the pure mapping class group of the sphere with four holes with
respect to a certain generating set. We prove that the ratio of
the number of pseudo--Anosov elements to that of all elements in a
ball with center at the identity tends to one as the radius of the
ball tends to infinity.
\end{abstract}

\maketitle
\section{Introduction}
A finitely generated group can be seen as a metric space after
fixing a finite generating set. The metric is the so called word
metric. As is well--known, the mapping class group of a compact
surface is finitely generated, thus a metric space.

The purpose of this note is to prove that, after fixing a certain
set of generators, in a ball centered at the identity in the pure
mapping class group of a four holed sphere (which is a free group
of rank two), almost all elements are pseudo--Anosov. More
precisely, in a ball with center at the identity, the ratio of the
number of pseudo--Anosov elements to the number of all elements
tends to one as the radius of the ball tends to infinity. In fact,
we prove more: We give the growth series of reducible and of
pseudo--Anosov elements with respect to a fixed set of generators.
It turns out that the growth functions of these elements are
rational. This gives a partial answer to Question 3.13  and
verifies Conjecture 3.15 in~\cite{F} in a special case.

\bigskip
\section{Preliminaries}
\bigskip

Let $G$ be a finitely generated group with a finite generating set
$A$, so that every element of $G$ can be written as a product of
elements in $A\cup A^{-1}$. The {\em length} of an element $g\in
G$ (with respect to $A$) is defined as
\[
||g||_A=\min \{ k \, \colon \,  g=a_1a_2\cdots a_k, \, a_i\in
A\cup A^{-1}\}.
\]
The {\em distance} between two elements $g$ and $h$ is defined as
$d_A(g,h)=||h^{-1}g||_A$. The function $d_A$ is a metric on $G$,
called the {\em word metric}. Of course, this metric depends
heavily on the generating set. The choice of different generating
sets give rise to equivalent metrics. We will always fix a finite
generating set $A$ and drop $A$ from the notation.

For a subset $P$ of $G$, the {\em growth series} of $P$ relative
to the generating set $A$ is the formal power series $\sum  c_n
x^n$, where the coefficient $c_n$ of $x^{n}$ is the number of
elements of length $n$ in $P$. The {\em growth function} of $P$ is
the function represented by the growth series. In the mapping
class group, we may take $P$ to be periodic, reducible or
pseudo--Anosov elements.

Let $S$ be a compact connected orientable surface of genus $g$
with $r\geq 0$ holes (= boundary components). The {\em mapping
class group} $\Mod (S)=\Mod (g,r)$ of $S$ is defined as the group
of isotopy classes of orientation--preserving homeomorphisms $S\to
S$. The subgroup $\PMod (g,r)$ of $\Mod (g,r)$ consisting of
isotopy classes of homeomorphisms preserving each boundary
component of $S$ is the {\em pure mapping class group}.

Thurston's classification of surface diffeomorphisms says that for
a mapping class $f$ which is not the identity exactly one of the
followings holds: (1) $f$ is periodic, i.e. $f^m=1$ for some
$m\geq 2$; (2) $f$ is reducible, i.e. there is a (closed)
one--dimensional submanifold $C$ of $S$ such that $f(C)=C$; (3)
$f$ is pseudo--Anosov (Anosov if $S$ is a torus).

It is well known that the mapping class group $\Mod (1,0)$ of a
torus is isomorphic to $SL(2, \mathbb Z)$. The elements of the
group $\Mod (1,0)$ are classified by the traces of the
corresponding matrices; if $f$ is an element of $\Mod (1,0)$, then
it is periodic if $|\trace (f)|< 2$, reducible if $|\trace
(f)|=2$, and Anosov if $|\trace (f)|> 2$ (cf. see \cite{CB}). In
\cite{T}, Takasawa computed the growth series of periodic,
reducible and Anosov elements of $\Mod (1,0)$ and found their
growth functions. He proved that almost all elements of the
mapping class group of the torus are Anosov. That is, with respect
to a certain generating set, the ratio of the number of Anosov
elements to the number of all elements in a ball centered at
 the identity tends to one as the radius of the ball tends to infinity.

Now let $S$ be a sphere with four holes and let $a$ and $b$ be two
distinct nonisotopic simple closed curves on $S$ such that each of
$a$ and $b$ separates $S$ into two pairs of pants and that $a$
intersects $b$ precisely at two points (c.f.
Figure~\ref{lantern}). It is well known that $\PMod (0,4)$ is
isomorphic to the free group $F_2$ and freely generated by the
Dehn twists $t_a$ and $t_b$ about $a$ and $b$ respectively. We
will always take this generating set below.

\bigskip
\section{The number of reducible and pseudo--Anosov elements
in the mapping class group $\PMod(0,4)$}
\bigskip

\subsection{Counting certain elements in the free group of rank two.}
 We begin by counting certain type of elements in the free group of rank
 two. Let $F_2$ be the free group of rank two freely generated by $\{ \alpha,
 \beta\}$. We fix this set of generators throughout this subsection.

The next lemma is elementary and is easy to prove.

\begin{Lemma} \label{lemma:free}
The growth series of $F_2$ is
\begin{eqnarray*}
h(x)&=& 1+4x+4\cdot
3x^{2}+4\cdot3^{2}x^{3}+\cdots+4\cdot3^{n-1}x^{n}+\cdots .
\end{eqnarray*}
\end{Lemma}

\smallskip

For an element $\gamma\in F_2$, let $C({\gamma}, n)$ denote the
set of elements in $F_2$ of length $n$ of the form $w\gamma^k
w^{-1}$, where $k$ is an integer and $w\in F_2$. Let $|C({\gamma},
n)|$ denote the cardinality of $C({\gamma}, n)$.

\smallskip

\begin{Lemma} \label{lemma:odd}
\begin{enumerate}
  \item  If $w\alpha^k w^{-1}$ and $v\alpha^l v^{-1}$ are reduced, then $w\alpha^k w^{-1}=v\alpha^l v^{-1}$
    if and only if $w=v$ and $k=l$.
  \item For each nonnegative integer $r$, $|C(\alpha,2r+1)|=| C(\alpha,2r+2)|=|
    C(\beta,2r+1)|=| C(\beta,2r+2)|=2 \cdot 3^{r}$.
  \item For each nonnegative integer $r$, $|C(\alpha\beta ,2r+1)|=0$ and
    $|C(\alpha\beta,2r+2)|= 4 \cdot 3^{r}$.
\end{enumerate}
\end{Lemma}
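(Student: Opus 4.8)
The plan is to prove all three parts by exploiting uniqueness of reduced words in $F_2$ together with the uniqueness of the cyclic reduction of a group element. For part (1), since a reduced word determines its element uniquely, the hypothesis says $w\alpha^k w^{-1}$ and $v\alpha^l v^{-1}$ are literally the same reduced word; note that $w\alpha^k w^{-1}$ being reduced (for $k\neq 0$) means exactly that $w$ is reduced and does not end in $\alpha^{\pm1}$. Assuming without loss of generality that $|w|\le |v|$ and reading from the left, $w$ is a prefix of $v$, so $v=wu$ with the concatenation reduced; cancelling $w$ and $w^{-1}$ in the identity $w\alpha^k w^{-1}=v\alpha^l v^{-1}$ leaves $\alpha^k=u\alpha^l u^{-1}$. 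Applying the homomorphism $F_2\to\mathbb Z$ sending $\alpha\mapsto 1,\ \beta\mapsto 0$ gives $k=l$, so $u$ commutes with $\alpha^l$; as the centralizer of a nontrivial element of a free group is cyclic, $u\in\langle\alpha\rangle$, and since $u$ inherits from $v$ the property of not ending in $\alpha^{\pm1}$ we conclude $u=1$, i.e. $w=v$. The case $k=0$ is immediate.

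For part (2) I would use part (1) to give each element of $C(\alpha,n)$ a unique reduced expression $w\alpha^k w^{-1}$ with $k\neq 0$ and $w$ not ending in $\alpha^{\pm1}$ (any conjugate is brought to this form by absorbing trailing powers of $\alpha$ into the exponent). Such an element has length $2|w|+|k|$, so counting $C(\alpha,n)$ reduces to counting pairs $(w,k)$ with $2|w|+|k|=n$. Since the number of reduced words of length $m\ge 1$ ending in a prescribed letter is $3^{m-1}$, the number of admissible $w$ of length $m$ (those ending in $\beta^{\pm1}$, or empty) is $1$ for $m=0$ and $2\cdot 3^{m-1}$ for $m\ge 1$; combining with the two sign choices for $k$ and summing the resulting geometric series over the allowed values of $|w|$ yields $2\cdot 3^r$ in both the odd and the even case. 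The identical count for $\beta$ follows from the symmetry exchanging $\alpha$ and $\beta$.

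Part (3) is where the real work lies, and the main obstacle is that $\alpha\beta$ is not a power of a single generator, so the naive analogue of part (1) fails: the word $(\alpha\beta)^k$ has two distinct cyclic permutations, $(\alpha\beta)^k$ and $(\beta\alpha)^k$, which are conjugate to one another. The plan is to replace the representation by the canonical cyclic reduction, writing every $g$ uniquely as $g=w\,\bar g\,w^{-1}$ with $\bar g$ cyclically reduced and no cancellation at the two junctions, where $\bar g$ and $w$ arise by peeling matching inverse letters from the ends of the reduced word for $g$. Then $g$ belongs to $C(\alpha\beta,\cdot)$ precisely when $\bar g$ is a cyclic permutation of $(\alpha\beta)^{\pm k}$, i.e. one of the four cyclically reduced words $(\alpha\beta)^k,\ (\beta\alpha)^k,\ (\alpha^{-1}\beta^{-1})^k,\ (\beta^{-1}\alpha^{-1})^k$ of length $2k$. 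Because every such core has even length, $2|w|+2k=n$ has no solution for odd $n$, giving $|C(\alpha\beta,2r+1)|=0$. For the even case I would verify that, for each of the four cores, the non-cancellation condition forbids exactly two values for the last letter of $w$, so once more there are $2\cdot 3^{m-1}$ admissible $w$ of each positive length $m$ (and one of length $0$); summing this geometric series over the four cores and over $k$ with $|w|+k=r+1$ produces $4\cdot 3^r$. The delicate point requiring care is confirming that the cyclic-reduction decomposition is genuinely unique, so that partitioning $C(\alpha\beta,n)$ by its four possible cores neither omits elements nor overcounts them.
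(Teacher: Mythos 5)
Your proposal is correct, but it follows a genuinely different route from the paper's proof. For part (1) the two arguments are close in spirit: the paper gets $k=l$ by noting that $\alpha^{k-l}$ is a commutator (hence trivial in the abelianization) and then compares lengths to get $w=v$, while you use the prefix structure of the common reduced word, the abelianization map, and the fact that centralizers in a free group are cyclic. The real divergence is in (2) and (3). The paper argues by induction on $r$: it gives a bijection $\phi\colon C(\alpha,2r+1)\to C(\alpha,2r+2)$ (shift the exponent by one), uses the automorphism swapping $\alpha$ and $\beta$, and constructs an explicit map $\varphi$ whose images partition $C(\alpha,2r+3)$ into triples, yielding $|C(\alpha,2r+3)|=3|C(\alpha,2r+1)|$; for (3) it similarly derives $|C(\alpha\beta,2r+2)|=3|C(\alpha\beta,2r)|$ by analyzing which conjugations of length-$2r$ elements produce no cancellation, treating the four extremal elements $(\alpha\beta)^{\pm r}$, $(\beta\alpha)^{\pm r}$ separately. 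You instead attach to each element a canonical form and count directly: for conjugates of powers of $\alpha$, the unique reduced expression $w\alpha^k w^{-1}$ with $w$ not ending in $\alpha^{\pm 1}$ (this is where your part (1) is used); for conjugates of powers of $\alpha\beta$, the unique cyclic-reduction decomposition $w\,\bar g\,w^{-1}$ with $\bar g$ one of the four cyclically reduced cores $(\alpha\beta)^{\pm k}$, $(\beta\alpha)^{\pm k}$ up to rewriting. Then both counts reduce to summing the geometric series $1+\sum_{m=1}^{r}2\cdot 3^{m-1}=3^r$, with the factor $2$ (signs of $k$) in part (2) and the factor $4$ (four cores) in part (3), which indeed gives $2\cdot 3^r$ and $4\cdot 3^r$. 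Your method buys closed-form counts in one pass, unifies (2) and (3) under a single scheme, and explains structurally why odd lengths vanish in (3); its cost is reliance on standard free-group facts that must be proved or cited — uniqueness of reduced words, cyclic centralizers, uniqueness of the cyclic-reduction decomposition, and the fact that conjugate cyclically reduced words are precisely cyclic permutations of one another (you flag the third of these as the delicate point; the fourth deserves the same flag, since it is what guarantees your four cores exhaust all possible cyclic reductions). The paper's induction is longer and more hands-on, but entirely self-contained, using essentially nothing beyond uniqueness of reduced words.
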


\begin{proof}
If $w\alpha^k w^{-1}=v\alpha^l v^{-1}$ then
$\alpha^{k-l}=w^{-1}v\alpha^l v^{-1}w \alpha^{-l}$, a commutator.
Hence, $k=l$. Now, by looking at the lengths of each side of
$\alpha^k= w^{-1}v\alpha^k v^{-1}w$, we deduce that $w=v$. The
converse is clear, proving~(1).

Define a function $\phi\colon C(\alpha,2r+1)\to C(\alpha,2r+2)$
by
\[
\phi(w\alpha^kw^{-1})= \left\{
 \begin{array}{ll}
 w\alpha^{k+1}w^{-1}, & \mbox{if $k>0$}\\
 w\alpha^{k-1}w^{-1}, & \mbox{if $k<0$},
 \end{array}
 \right.
\]
 where
$w\alpha^kw^{-1}$ is reduced. Clearly, the function $\phi$ is
onto. It follows from~(1) that it is also one--to--one. Consider
also the automorphism $\psi \colon F_2\to F_2$ given by $\psi
(\alpha)=\beta$ and $\psi (\beta)=\alpha$. The map $\psi$ is an
isometry and $\psi ( C(\alpha, n) )= C(\beta, n)$. Thus, the first
three equalities in~(2) are proved. In order to complete the proof
of~(2), we show $|C(\alpha,2r+1)|=2 \cdot 3^{r}$. The proof of
this claim is by induction on $r$.

Note that if $k$ is even then the length of $w\alpha^kw^{-1}$ is
even for any $w\in F_2$. Hence, $C(\alpha,2r+1)$ contains the
conjugates of odd powers of $\alpha$. Note also that if
$w\alpha^kw^{-1}$ is a reduced word of length $n$, then $-n\leq k
\leq n$.

The set $C(\alpha,1)$ contains only two elements, $\alpha$ and $\alpha^{-1}$.
Hence, the claim holds in the case $r=0$.

Assume that $|C(\alpha,2r+1)|=2\cdot 3^r$. Define a function
$\varphi$ from $C(\alpha,2r+1)$ to the subsets of $C(\alpha,2r+3)$
as follows:\\
   $\bullet \quad  \varphi (\alpha^{2r+1} )=\{ \alpha^{2r+3},
     \beta\alpha^{2r+1}\beta^{-1},\beta^{-1}\alpha^{2r+1}\beta \}$;\\
   $\bullet \quad  \varphi (\alpha^{-(2r+1)})=\{ \alpha^{-(2r+3)},
        \beta\alpha^{-(2r+1)}\beta^{-1}, \beta^{-1}\alpha^{-(2r+1)}\beta \}$;\\
  $\bullet \quad  \varphi ( \alpha w \alpha^{-1})=\{ \alpha^2 w \alpha^{-2} ,
        \beta\alpha w \alpha^{-1}\beta^{-1},
           \beta^{-1}\alpha w \alpha^{-1} \beta \}$;\\
   $\bullet \quad  \varphi ( \alpha^{-1} w \alpha)=\{ \alpha^{-2}w \alpha^2 ,
            \beta\alpha^{-1} w \alpha\beta^{-1},
           \beta^{-1}\alpha^{-1} w \alpha \beta \}$;\\
   $\bullet \quad  \varphi ( \beta w \beta^{-1})=\{ \beta^2 w \beta^{-2} ,
   \alpha\beta w \beta^{-1}\alpha^{-1},
           \alpha^{-1}\beta w \beta^{-1} \alpha \}$;\\
   $\bullet \quad  \varphi ( \beta^{-1} w \beta)=\{ \beta^{-2} w \beta^2 ,
        \alpha\beta^{-1} w \beta\alpha^{-1},
           \alpha^{-1}\beta^{-1} w \beta \alpha \}$.

It is easy to check that the set
 \[ \{ \varphi(x) \, : \, x\in C(\alpha,2r+1) \} \]
is a partition of $C(\alpha,2r+3)$. That is, elements of this set
are pairwise disjoint and their union is equal to
$C(\alpha,2r+3)$. We deduce from this that
$|C(\alpha,2r+3)|=3|C(\alpha,2r+1)|=2\cdot 3^{r+1}$, completing
the proof of~(2).

It is clear that $|C(\alpha\beta, 2r+1)|=0$ for all $r\geq 0$.
Note that for any $w\in F_2$, the word length of
$w(\alpha\beta)^kw^{-1}$ is at least $2|k|$. That is, the set
$C(\alpha\beta ,2r+2)$ does not contain any conjugate of
$(\alpha\beta)^k$ for $|k|>r+1$.

The element $(\beta\alpha)^n$ is conjugate to $(\alpha\beta)^n$
and any element in $C(\alpha\beta ,2r+2)$ is of the form
$w(\alpha\beta)^nw^{-1}$ or $w(\beta\alpha)^nw^{-1}$ for some
$w\in F_2$ with $||w||=r+1-n$. Hence, we will only consider the
(reduced) words in these two forms.

The only conjugates of $(\alpha\beta)^k$ for $|k|=r+1$ contained
in $C(\alpha\beta ,2r+2)$ are elements of
\begin{eqnarray*}
A_{r+1}=\{ (\alpha\beta)^{r+1},(\beta\alpha)^{r+1},
(\alpha\beta)^{-(r+1)}, (\beta\alpha)^{-(r+1)} \}.
\end{eqnarray*}
All other elements of $C(\alpha\beta ,2r+2)$ are conjugates of
$(\alpha\beta)^{k}$ for $|k|\leq r$, hence they are conjugates of
elements of $C(\alpha\beta ,2r)$.

Consider the subset of  $C(\alpha\beta ,2r)$ consisting of the
conjugates of $(\alpha\beta)^{\pm r}$. They form the set
\begin{eqnarray*}
  A_{r}=\{ (\alpha\beta)^{r},(\beta\alpha)^{r},(\alpha\beta)^{-r},
  (\beta\alpha)^{-r} \}.
\end{eqnarray*}
Each element of $A_r$ gives rise two elements of length $2r+2$ by
conjugation. For instance, one may conjugate $(\alpha\beta)^{r}$
only with $\alpha$ and $\beta^{-1}$ in order to get an element of
length $2r+2$. Therefore, there are eight such elements in
$C(\alpha\beta ,2r+2)$.

The elements of the difference $C(\alpha\beta ,2r)- A_r$ are of
the form $\alpha w\alpha^{-1}$, $\alpha^{-1} w\alpha$, $\beta
w\beta^{-1}$ or $\beta^{-1} w\beta$. The number of such elements
is $|C(\alpha\beta ,2r)|-4$ and each gives rise to three elements
of length $2r+2$ by conjugation (if there is cancellation, we do
not need to take them).

It follows that
\[
|C(\alpha\beta ,2r+2)| = 4 + 8 + 3\left( |C(\alpha\beta
,2r)|-4\right)=  3 |C(\alpha\beta ,2r)|.
\]
Now, (3) follows from the fact  that $C(\alpha\beta ,2)$ consists
of four elements; namely
\[
C(\alpha\beta ,2)=\{ \alpha\beta, \beta\alpha, (\alpha\beta)^{-1},
(\beta\alpha)^{-1} \}.
\]

This finishes the proof of the lemma.
\end{proof}

\smallskip

\begin{Corollary} \label{cor:all}
The number of elements of length $n$ conjugate to a power of
$\alpha$, $\beta$ or $\alpha\beta$ is $4\cdot 3^r$ if $n=2r+1$ and
$8\cdot 3^r$ if $n=2r+2$ $(r\geq 0)$.
\end{Corollary}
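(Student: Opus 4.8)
The plan is to recognize the quantity to be counted as the cardinality of the union $C(\alpha,n)\cup C(\beta,n)\cup C(\alpha\beta,n)$, and then to reduce the computation to the three cardinalities already furnished by Lemma~\ref{lemma:odd}. The only substantive step is to check that these three sets are pairwise disjoint; once that is known, inclusion--exclusion collapses to a plain sum of the three counts.

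To establish disjointness I would pass to the abelianization $F_2\to\mathbb{Z}^2$ sending $\alpha\mapsto(1,0)$ and $\beta\mapsto(0,1)$. Conjugate elements have the same image under this homomorphism, so every element of $C(\alpha,n)$ maps to a point of the form $(k,0)$, every element of $C(\beta,n)$ to a point $(0,l)$, and every element of $C(\alpha\beta,n)$ to a point $(m,m)$. For $n\geq 1$ the relevant exponents $k,l,m$ are all nonzero, since an element of length at least one cannot be a conjugate of the identity. Thus the images lie respectively on the first axis, the second axis, and the diagonal, each with the origin removed, and these three punctured lines are pairwise disjoint in $\mathbb{Z}^2$. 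Hence $C(\alpha,n)$, $C(\beta,n)$, and $C(\alpha\beta,n)$ can share no common element. (Alternatively one could argue directly from the fact that in a free group conjugacy classes are determined by cyclically reduced representatives up to cyclic rotation, and $\alpha^k$, $\beta^l$, $(\alpha\beta)^m$ are cyclically reduced words lying in distinct classes for nonzero exponents.)

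With disjointness in hand the count becomes additive. For $n=2r+1$, Lemma~\ref{lemma:odd} gives $|C(\alpha,2r+1)|=|C(\beta,2r+1)|=2\cdot 3^r$ and $|C(\alpha\beta,2r+1)|=0$, so the total is $2\cdot 3^r+2\cdot 3^r+0=4\cdot 3^r$. For $n=2r+2$ the same lemma gives $2\cdot 3^r+2\cdot 3^r+4\cdot 3^r=8\cdot 3^r$. Since the disjointness is essentially immediate from the abelianization, there is no serious obstacle here: the corollary is a bookkeeping consequence of Lemma~\ref{lemma:odd}, and the only point demanding any care is confirming that no nontrivial element is simultaneously conjugate to powers of two distinct elements among $\alpha$, $\beta$, and $\alpha\beta$.
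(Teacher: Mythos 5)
Your proof is correct and follows essentially the same route as the paper: write the set in question as the disjoint union $C(\alpha,n)\cup C(\beta,n)\cup C(\alpha\beta,n)$ and add the cardinalities supplied by Lemma~\ref{lemma:odd}. The only difference is that the paper merely asserts pairwise disjointness, while you justify it via the abelianization $F_2\to\mathbb{Z}^2$ --- a correct and welcome addition, but not a different argument.
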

\begin{proof}
The set of elements of length $n$ conjugate to the given elements
is $C(\alpha ,2r+1)\cup C(\beta ,2r+1)$ if $n=2r+1$ and $C(\alpha
,2r+2)\cup C(\beta ,2r+2) \cup C(\alpha\beta ,2r+2)$ if $n=2r+2$.
These sets are pairwise disjoint. The result now follows from
Lemma~\ref{lemma:odd}.
\end{proof}

\smallskip

\subsection{The mapping class group $\PMod (0,4)$}

Since $\PMod (0,4)$ is isomorphic to $F_2$, there are no periodic
elements in $\PMod (0,4)$. Elements of $\PMod (0,4)$ different
from the identity are either reducible or pseudo--Anosov. In this
section, we compute the growth series and the growth functions of
these elements in $\PMod (0,4)$.

Let $S$ be a sphere with four holes. A simple closed curve $a$ on
$S$ is called {\em trivial} if either it bounds a disc or it is
parallel to a boundary component. Otherwise, it is called
nontrivial.

Let us fix two nontrivial simple closed curves $a$ and $b$ on $S$
intersecting transversely twice as in Figure~\ref{lantern}. It is
well known that the Dehn twists $t_a$ and $t_b$ generate the group
$\PMod (0,4)$ freely. By the lantern relation, there is a unique
simple closed curve $c$ on $S$ separating $S$ into two pairs of
pants and intersecting both $a$ and $b$ twice such that the Dehn
twists $t_a,t_b$ and $t_c$ satisfy $t_at_bt_c=1$ (c.f.
Figure~\ref{lantern}). Thus, we have $t_c=(t_at_b)^{-1}$, and
hence conjugates of powers $t_a$, $t_b$ and $t_at_b$ are
reducible. In fact, they are the only reducible elements in $\PMod
(0,4)$.

\begin{figure}[hbt]
    \begin{center}
        \includegraphics[width=4cm]{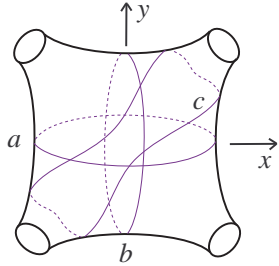}
        \caption {The Dehn twists about $a,b,c$ satisfy $t_at_bt_c=1$ in
        $\PMod (0,4)$ by the lantern relation.}
        \label{lantern}
    \end{center}
\end{figure}

\smallskip

\begin{Lemma} \label{lemma:reduce}
The reducible elements of $\PMod (0,4)$ consist of conjugates of
nonzero powers of $t_a$, $t_b$ and $t_at_b$.
\end{Lemma}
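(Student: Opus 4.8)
The plan is to prove the two inclusions separately, the nontrivial one being that \emph{every} reducible element has the stated form. For the easy inclusion, recall that $t_a$, $t_b$ and $t_c$ are Dehn twists, each fixing its defining curve, hence reducible; since $t_c=(t_at_b)^{-1}$, the elements $t_a$, $t_b$ and $t_at_b$ all have this property. Reducibility is preserved under taking nonzero powers (a power of $t_x$ still fixes $x$) and under conjugation, because $g\,t_x^{k}\,g^{-1}=t_{g(x)}^{k}$ fixes the curve $g(x)$. Thus every conjugate of a nonzero power of $t_a$, $t_b$ or $t_at_b$ is reducible, and the content of the lemma is the reverse inclusion.

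For the reverse inclusion I would first reduce ``$f$ is reducible'' to ``$f$ fixes a single essential simple closed curve.'' A reducible $f$ preserves some essential multicurve, but on $S=S_{0,4}$ every essential simple closed curve separates the four holes into two pairs, and each complementary piece is a pair of pants, which carries no essential curve other than ones parallel to its boundary. Hence no two non-isotopic essential curves can be disjoint, an invariant multicurve is a single curve $C$, and $f(C)=C$.

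Next I would invoke the classification of essential curves on $S_{0,4}$: they fall into exactly three types according to the pairing of the four holes they induce, and the curves $a,b,c$ realize these three types (this can be read off Figure~\ref{lantern}). By the change of coordinates principle there is, for any two curves of the same type, a homeomorphism fixing each hole that carries one to the other; thus $\PMod(0,4)$ acts transitively on the curves of each type, and any essential $C$ equals $h(x)$ for some $h\in\PMod(0,4)$ and some $x\in\{a,b,c\}$. Then $h^{-1}fh$ fixes $x$. To finish I would compute the stabilizer of $x$: cutting $S$ along $x$ yields two pairs of pants, so a pure mapping class fixing $x$ restricts on each piece to a product of boundary twists; the twists about the holes of $S$ are trivial in $\PMod(0,4)$, and the two sides cannot be interchanged since they carry different holes, leaving only the twist about $x$. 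Hence $\mathrm{Stab}_{\PMod(0,4)}(x)=\langle t_x\rangle$, so $h^{-1}fh=t_x^{k}$ with $k\neq 0$ (as $f\neq 1$), giving $f=h\,t_x^{k}\,h^{-1}$; replacing $t_c$ by $(t_at_b)^{-1}$ then yields the statement.

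The main obstacle is the middle step, namely showing that $a,b,c$ meet \emph{every} orbit of essential curves, equivalently that $\PMod(0,4)$ has exactly these three orbits. If $a,b,c$ happened to represent only two of the orbits the lemma would fail, so this orbit count is essential. It is precisely here that the combinatorial structure of curves on the four-holed sphere must be used: the identification of the isotopy classes with slopes in $\widehat{\mathbb{Q}}$, together with the three parity classes of slopes (equivalently, the three cusps of the $\Gamma(2)$ picture under the isomorphism $\PMod(0,4)\cong F_2$), pins down the three types and confirms that $a,b,c$ exhaust them.
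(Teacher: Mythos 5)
Your proof is correct, but at the decisive step it takes a genuinely different route from the paper's. Both arguments rest on the same classification input --- every essential simple closed curve on the four-holed sphere is carried to one of $a,b,c$ by a homeomorphism preserving each boundary component (the three-orbit fact you rightly single out as the main obstacle; the paper cites Ivanov for it) --- but they extract the form $f=h\,t_x^k\,h^{-1}$ differently. The paper never cuts the surface: from $F(d)=d$ it observes $ft_df^{-1}=t_{F(d)}=t_d$, so $f$ centralizes $t_d$ in the free group $\PMod(0,4)$; since $t_d$ can be completed to a free basis, its centralizer is $\langle t_d\rangle$, whence $f=t_d^k$ immediately, and the change of coordinates is applied only afterwards to move $d$ into $\{a,b,c\}$. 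You change coordinates first and then compute $\mathrm{Stab}_{\PMod(0,4)}(x)=\langle t_x\rangle$ topologically, by cutting along $x$ into two pairs of pants and using that the twists about the holes die in $\PMod(0,4)$. The paper's centralizer trick is shorter, but it leans entirely on $\PMod(0,4)$ being free, and its unproved assertion that $t_d$ is primitive itself rests on the same change-of-coordinates fact, so the logical dependencies of the two proofs coincide. Your cutting argument uses no group theory beyond the triviality of boundary twists, hence it is the version that would generalize to $\Mod(0,n)$ for $n\geq 5$ and to surfaces whose mapping class groups are not free and where centralizers are harder to control; you also make explicit two points the paper passes over silently: that an invariant essential multicurve on the four-holed sphere has a single component, and the easy inclusion that conjugates of nonzero powers of $t_a$, $t_b$, $t_at_b$ are indeed reducible (which the paper records in the text before the lemma rather than in its proof).
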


\begin{proof} Let $f$ be a reducible element $\PMod (0,4)$.
Then $F(d)=d$ for some nontrivial simple closed curve $d$ and
$F\in f$. Thus, $t_df=ft_d$, since $ft_{d}f^{-1}=t_{F(d)}=t_{d}$.
Since $\PMod (0,4)$ is a nonabelian free group and $t_d$ can be
completed to a free basis of $\PMod (0,4)$, we conclude that
$f=t_{d}^{k}$ for some nonzero integer $k$.

It follows from the classification of simple closed curves on $S$
(c.f. see \cite{I}) that there is a homeomorphism $H:S\to S$
preserving each boundary component of $S$ such that $H(d)\in \{
a,b,c\}$.

Let $h$ denote the isotopy class of $H$ in $\PMod (0,4)$. If
$H(d)=a$ then $f=t_{d}^{k}=h^{-1}t_{a}^{k} h$, if $H(d)=b$ then
$f=h^{-1}t_{b}^{k} h$, and if $H(d)=c$ then $f=h^{-1}t_{c}^{k} h=
h^{-1}  (t_at_b)^{-k} h$, proving the lemma.
\end{proof}

\smallskip

We are now ready to state and prove the main result of this paper.

\smallskip

\begin{Theorem}
With respect to the generating set $\{ t_a, t_b\}$ of $\PMod
(0,4)$,
\begin{enumerate}
    \item \label{reduce} the growth series of reducible elements is
                \begin{eqnarray*}
                    r(x)&=&
                    4(x+3x^{3}+3^{2}x^{5}+3^{3}x^{7}+\cdots+3^{r}x^{2r+1}+\cdots)\\
                    & & \hspace*{0.3cm} +8(x^2+3x^{4}
                    +3^{2}x^{6}+3^{3}x^{8}+\cdots+3^{r}x^{2r+2}+\cdots).
            \end{eqnarray*}
                    Hence, the growth function of reducible elements is
        \begin{eqnarray*}
        r(x)=\frac{4x+8x^2}{1-3x^2}\,.
        \end{eqnarray*}
    \item \label{growthp-a} the growth series of pseudo--Anosov elements is
         \begin{eqnarray*}
        4 \sum_{r=0}^{\infty} 3^r(3^{r+1}-2)x^{2r+2} +  4\sum_{r=1}^{\infty}
        3^r(3^{r}-1)x^{2r+1}
       \end{eqnarray*}
       and the growth function of pseudo--Anosov elements is
        \begin{eqnarray*}
                p(x)= \frac{4x^{2}(1+3x)}{(1-3x)(1-3x^2)}.
        \end{eqnarray*}
    \item \label{limitp-a}
            if $p_n$ and $h_n$ denote the number of pseudo--Anosov and
            all elements of length at most $n$ respectively, then we have
     \begin{eqnarray*}
       \lim_{n\to\infty}\frac{p_n}{h_n}=1 \,.
     \end{eqnarray*}
\end{enumerate}

\end{Theorem}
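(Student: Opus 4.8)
The plan is to transport everything through the isomorphism $\PMod(0,4)\cong F_2$ sending $t_a\mapsto\alpha$ and $t_b\mapsto\beta$; since $t_at_b\mapsto\alpha\beta$, the three reducible conjugacy families of Lemma~\ref{lemma:reduce} become exactly the elements counted in Corollary~\ref{cor:all}. For part~(\ref{reduce}) I would first invoke Lemma~\ref{lemma:reduce} to say that an element of length $n\ge 1$ is reducible precisely when it is conjugate to a nonzero power of $\alpha$, $\beta$ or $\alpha\beta$; the power-zero element never intrudes because the identity has length $0$. Corollary~\ref{cor:all} then supplies the coefficient of $x^n$ directly, namely $4\cdot 3^r$ when $n=2r+1$ and $8\cdot 3^r$ when $n=2r+2$, which is the displayed series for $r(x)$. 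Summing the two geometric series $\sum_r 4\cdot 3^r x^{2r+1}=4x/(1-3x^2)$ and $\sum_r 8\cdot 3^r x^{2r+2}=8x^2/(1-3x^2)$ yields the closed form $r(x)=(4x+8x^2)/(1-3x^2)$.

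For part~(\ref{growthp-a}) the key point is that $\PMod(0,4)\cong F_2$ has no periodic elements, so every element of length $n\ge 1$ is either reducible or pseudo--Anosov and these two classes are disjoint. Hence the number of pseudo--Anosov elements of length $n$ equals the total number of elements of that length, given by Lemma~\ref{lemma:free} as $4\cdot 3^{n-1}$, minus the reducible count found above. For $n=2r+1$ this is $4\cdot 3^{2r}-4\cdot 3^r=4\cdot 3^r(3^r-1)$, which correctly vanishes at $r=0$, and for $n=2r+2$ it is $4\cdot 3^{2r+1}-8\cdot 3^r=4\cdot 3^r(3^{r+1}-2)$, reproducing the stated series. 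At the level of generating functions this reads $p(x)=h(x)-1-r(x)$, the $-1$ removing the identity; using $h(x)=(1+x)/(1-3x)$ (the closed form of Lemma~\ref{lemma:free}) gives $h(x)-1=4x/(1-3x)$, and a short computation over the common denominator $(1-3x)(1-3x^2)$ collapses the numerator to $4x^2(1+3x)$, yielding $p(x)=4x^2(1+3x)/((1-3x)(1-3x^2))$.

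For part~(\ref{limitp-a}) I would compare growth rates rather than manipulate the series further. Summing Lemma~\ref{lemma:free} gives $h_n=2\cdot 3^n-1\sim 2\cdot 3^n$, governed by the pole of $h(x)$ at $x=1/3$. The non--pseudo--Anosov elements of length at most $n$ number $h_n-p_n=1+(\text{reducibles of length}\le n)$, and since the reducible coefficients $4\cdot 3^r,\,8\cdot 3^r$ sit at lengths $\approx 2r$, this is a finite geometric sum bounded by $C\cdot 3^{n/2}$ for some constant $C$ (equivalently, $r(x)$ has radius of convergence $1/\sqrt3>1/3$). Therefore
\[
\frac{p_n}{h_n}=1-\frac{h_n-p_n}{h_n}\ge 1-\frac{1+C\cdot 3^{n/2}}{2\cdot 3^n-1}\longrightarrow 1 ,
\]
and as $p_n\le h_n$ the ratio is squeezed to $1$.

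The individual steps are short once the dictionary of the first paragraph is fixed; the only point demanding genuine care is checking that the three conjugacy families are mutually disjoint and jointly exhaustive among reducibles, so that nothing is double counted nor missed when passing from Lemma~\ref{lemma:reduce} to Corollary~\ref{cor:all}. This is where I would be most careful: in $F_2$ two elements are conjugate if and only if their cyclic reductions agree up to cyclic permutation, so nonzero powers of $\alpha$, $\beta$ and $\alpha\beta$ occupy three distinct conjugacy classes, which is precisely the disjointness already exploited in Corollary~\ref{cor:all}.
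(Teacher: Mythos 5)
Your proposal is correct and follows essentially the same route as the paper: Lemma~\ref{lemma:reduce} plus Corollary~\ref{cor:all} for the reducible series, the identity $p(x)=h(x)-1-r(x)$ for the pseudo--Anosov series, and the observation that reducibles grow like $3^{n/2}$ against $h_n\sim 2\cdot 3^n$ for the limit. The only cosmetic difference is in part~(\ref{limitp-a}), where the paper computes $r_n$ exactly ($10\cdot 3^r-6$ and $2\cdot 3^{r+1}-6$) while you use the upper bound $C\cdot 3^{n/2}$ and a squeeze; both rest on the same comparison of growth rates.
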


\begin{proof}
By Lemma~\ref{lemma:reduce}, reducible elements in $\PMod (0,4)$
are conjugates of nonzero powers of $t_a, t_b$ and $t_at_b$. By
Corollary~\ref{cor:all}, the number of such elements of length
$n>0$ in $\PMod (0,4)$ is $4\cdot 3^r$ if $n=2r+1$ and $8\cdot
3^r$ if $n=2r+2$.

Therefore the growth series of reducible elements is
\begin{eqnarray*}
r(x)&=&4x+4\cdot 3x^{3}+4\cdot 3^{2}x^{5}+4\cdot 3^{3}x^{7}+\cdots
+4\cdot 3^{r}x^{2r+1}+\cdots \\
& & \hspace*{0.3cm}+8 x^{2}+8\cdot
3x^{4}+8\cdot 3^{2}x^{6}+8\cdot
3^{3}x^{8}+\cdots+8\cdot 3^{r}x^{2r+2}+\cdots \\
&=&(4x+8x^2)(1+3x^{2}+3^{2}x^{4}+3^{3}x^{6}+\cdots+3^{r}x^{2r}+\cdots).\\
\end{eqnarray*}
It follows that the growth function is given by
\[
r(x)= \frac{4x+8x^{2}}{1-3x^{2}}.
\]
This proves (\ref{reduce}).

The growth series and the growth function of all elements are
\begin{eqnarray*}
h(x)&=& 1+4x+4\cdot 3x^{2}+4\cdot 3^{2}x^{3}+\cdots+4\cdot
3^{n-1}x^{n}+\cdots\\
    &=& \frac{1+x}{1-3x}\, .
\end{eqnarray*}
The growth series of pseudo--Anosov elements follows from this and
(1). The growth function of pseudo--Anosov elements is
\begin{eqnarray*}
p(x)&=&h(x)-1-r(x)\\
    &=& \frac{4x}{1-3x}-\frac{4x+8x^{2}}{1-3x^{2}} \\
    &=& \frac{4x^{2}(1+3x)}{(1-3x)(1-3x^{2})}.
\end{eqnarray*}
This proves (\ref{growthp-a}).

Let $r_{n}$ denote number of reducible elements of length at most $n$.
By (\ref{reduce}), we have
\begin{eqnarray*}
r_n &=& 4(1+3+3^2+\cdots +3^r)+8(1+3+3^2+\cdots +3^{r-1})\\
    &=& 10\cdot 3^{r}-6
\end{eqnarray*}
if $n=2r+1$ and
\begin{eqnarray*}
r_n &=& 4(1+3+3^2+\cdots +3^{r-1})+8(1+3+3^2+\cdots +3^{r-1})\\
    &=& 2\cdot 3^{r+1}-6
\end{eqnarray*}
if $n=2r$. By Lemma~\ref{lemma:free}, we get
\begin{eqnarray*}
h_n &=& 1+4(1+3+3^2+\cdots +3^{n-1})\\
    &=& 2\cdot 3^{n}-1.
\end{eqnarray*}

It follows that
 \begin{eqnarray*}
       \lim_{n\to\infty}\frac{r_n}{h_n}=0 \,.
     \end{eqnarray*}
Since $p_n=h_n-r_n-1$, the proof of (\ref{limitp-a}) follows.
\end{proof}

\bigskip
\noindent
\subsection{A little more} Let $\imath$ (resp. $\jmath$) denote the isotopy
class of the rotation about the $x$--axis (resp. $y$--axis) by
$\pi$. (We assume that the surface lie in the three space and is
invariant under these rotations, as in Figure~\ref{lantern}.) Let
$\Gamma$ denote the subgroup of the mapping class group
$\Mod(0,4)$ generated by $\PMod (0,4)$, $\imath$ and $\jmath$.
Then $\Gamma$ is isomorphic to $\PMod(0,4)\times {\mathbb
Z}_2\times {\mathbb Z}_2$, and is of index $6$ in $\Mod(0,4)$.

Since $\imath,\jmath$ and $\imath\jmath$ preserve each nonboundary
parallel simple closed curve up to isotopy, it can be shown that
an element $f$ in $\PMod(0,4)$ is pseudo--Anosov if and only if
$f\imath,f\jmath$ and $f\imath\jmath$ are pseudo--Anosov. It
follows that, with respect to the generating set $\{
t_a,t_b,\imath,\jmath\}$ of $\Gamma$, the ratio of the number of
pseudo--Anosov elements to that of all elements in a ball of
radius $n$ centered at the identity tends to one as $n$ tends to
infinity. It would be good to extend this result to $\Mod(0,4)$
and to all $\Mod(0,n)$.

\smallskip

\noindent {\bf Acknowledgement.} The writing of this paper was
finished while the second author was visiting University of
Chicago in January 2008. The second author thanks to Benson Farb
for his interest in this work.

\bigskip


\begin{thebibliography}{1}
\bibitem{CB}  Andrew J. Casson,  Steven A. Bleiler,
{\em Automorphisms of Surfaces After Nielsen and Thurston},
Cambridge University Press, 1988.

\bibitem{F} Benson Farb,
{\em Some problems on mapping class groups and moduli space}, in
``Problems on Mapping Class Groups and Related Topics", ed. by B.
Farb, Proc. Symp. Pure and Applied Math., Vol. 74, pp. 11--55.

\bibitem{H}  Pierre de la Harpe, {\em Topics in Geometric Group Theory},
The University  of Chicago Press, Chicago, 2000.

\bibitem{I} Nikolai V. Ivanov,  {\em Automorphisms of Teichm\"uller modular
groups}, Topology and geometry---Rohlin Seminar, 199--270, Lecture
Notes in Math., 1346, Springer, Berlin, 1988.

\bibitem{T} Mitsuhiko Takasawa, {\em Enumeration of Mapping Classes for the
Torus}, Geometriae Dedicata {\bf 85}, (2001), 11--19.


\end{thebibliography}
\end{document}